\numberwithin{equation}{section}
\newtheorem{thm}{Theorem}[section]
\newtheorem{defi}[thm]{Definition}
\newtheorem{prop}[thm]{Proposition}
\newtheorem{lem}[thm]{Lemma}
\newtheorem{rem}[thm]{Remark}
\newtheorem{example}[thm]{Example}
\setlist{topsep=0.1em,partopsep=0em,parsep=0em,itemsep=0.1em}
\newcommand\RR{\mathbb R}
\newcommand\CC{\mathbb C}
\newcommand\pRR{{}^\prime\mathbb R}
\newcommand\uSec{\mathrm{\underline{Sec}}}
\newcommand\uHol{\mathrm{\underline{Hol}}}
\newcommand\Hom{\mathrm{Hom}}
\newcommand\reg{\mathrm{reg}}
\newcommand\gp{\mathrm{gp}}
\newcommand\cM{\mathcal M}
\newcommand\cO{\mathcal O}
\newcommand\cP{\mathcal P}
\newcommand\cQ{\mathcal Q}
\newcommand\cZ{\mathcal Z}
\newcommand\sfC{\mathsf C}
\newcommand\sfD{\mathsf D}
\newcommand\sfE{\mathsf E}
\newcommand\sfK{\mathsf K}
\newcommand\sfP{\mathsf P}
\newcommand\op{\mathsf{op}}
\newcommand\Vect{\mathsf{Vect}}
\newcommand\Top{\mathsf{Top}}
\newcommand\Sm{\mathsf{Sm}}
\newcommand\Log{\mathsf{Log}}
\newcommand\Der{\mathsf{Der}}
\newcommand\Perf{\mathsf{Perf}}
\newcommand\Open{\mathsf{Open}}
\newcommand\Shv{\mathsf{Shv}}
\newcommand\opemb{\mathsf{opemb}}
\newcommand\coker{\operatorname{coker}}
\newcommand\abs[1]{\lvert#1\rvert}
\newcommand*\smashxrightarrow[1]{\mathrel{\mathstrut\smash{\raisebox{-.3ex}{$\xrightarrow{\smash{\raisebox{-.1ex}{$\scriptstyle #1$}}}$}}}}
\newenvironment{proof}{\noindent\emph{Proof:\ }}{\hfill$\blacksquare$\medskip}
\newlength\dummy
\begin{document}

\markboth{\hfill{\rm John Pardon} \hfill}{\hfill {\rm Representability in non-linear elliptic Fredholm analysis \hfill}}

\title{Representability in non-linear elliptic Fredholm analysis}

\author{John Pardon}

\begin{abstract}
We summarize current work aimed at showing that moduli spaces of solutions to non-linear elliptic Fredholm partial differential equations are derived log smooth manifolds.
\end{abstract}

\maketitle

\setcounter{tocdepth}{1}
\tableofcontents

\section{Introduction}

The motivating question for this research summary is the following:
\begin{center}
\parbox{0.8\textwidth}{\emph{What sort of mathematical object is the moduli space of solutions to a non-linear elliptic Fredholm partial differential equation?}}
\end{center}
At the most basic level, such moduli spaces are topological spaces (equipped with the topology of uniform convergence in all derivatives).
This topological structure is (in general) insufficient for enumerative questions (such as asking for a `signed count' of solutions).

It is a classical fact going back to Kuranishi \cite{kuranishi} and Atiyah--Hitchin-Singer \cite{atiyahhitchinsinger} that the moduli spaces above may be expressed locally as the zero set of a smooth map $\RR^n\to\RR^m$.
Such a chart is called a \emph{Kuranishi chart}, and captures enumerative information \emph{locally}.
The idea of equipping such a moduli space with an \emph{atlas} of Kuranishi charts and patching together their enumerative information globally first appeared in work of Fukaya--Ono \cite{fukayaono} and was developed further by Fukaya--Oh--Ohta--Ono \cite{foooI,foooII} and others.
The construction of such atlases, as well as the axioms they are required to satisfy, remains \emph{ad hoc}, despite many years of effort from a number of authors to establish a more canonical approach.
A more canonical approach would be highly desirable, as it could be expected to eliminate the need for explicit prescriptions for, and delicate manipulations of, atlases of compatible Kuranishi charts, thus enabling `operadic' reasoning about moduli spaces of pseudo-holomorphic curves to be independent of the foundational discussion (something which is not possible with current technology).
A prerequisite for such an approach is to understand more intrinsically what structure such an atlas of charts is really describing.

In algebraic geometry, there is an established approach to the definition and construction of moduli spaces based on \emph{moduli functors} (due to Grothendieck, Artin, Deligne--Mumford, and others).
The basic idea (trivial, yet revolutionary) is that to specify a moduli space $\cM$, it is equivalent to specify the functor of maps $Z\to\cM$ from spaces $Z$, and this is supposed to be `families of objects parameterized by $Z$' (the term `space' is really a placeholder: we could use any category we like here, such as topological spaces, smooth manifolds, complex analytic spaces, etc.).
The moduli functor
\begin{equation*}
Z\mapsto\{\text{families of objects parameterized by }Z\}
\end{equation*}
is usually much more straightforward and tautological to define than the moduli space itself.
Of course, one still needs to show that the moduli functor is \emph{representable} (i.e.\ is of the form $Z\mapsto\Hom(Z,\cM)$ for some space $\cM$).
Crucially, representability is a \emph{property} (the space $\cM$ is automatically unique up to unique isomorphism, if it exists), and moreover it is a \emph{local} property.
Suddenly we have gained something for free: local charts glue together automatically!
In a similar vein, building compatible atlases on a collection of related moduli spaces is quite delicate, whereas the corresponding moduli functors are related tautologically, hence so are their representing objects.

We may thus ask: what moduli functors can we associate to a non-linear elliptic Fredholm partial differential equation, and are they representable?
It is fairly straightforward to define a moduli functor on topological spaces and to show that it is represented by the topological moduli space alluded to above.
It is even easier to define the moduli functor on smooth manifolds, and standard non-linear elliptic Fredholm analysis shows that this functor is representable over the open locus where the linearized operator is surjective.
It is thus natural to ask whether there exists a reasonable moduli functor on a suitable category of `spaces with an atlas of compatible Kuranishi charts' and whether this functor is representable.
This was conjectured explicitly by Joyce \cite[\S 5.3]{joycenewkuranishi}, and it is the differential geometric analogue of the derived approach to algebraic Gromov--Witten theory proposed by Kontsevich \cite{kontsevich}, and developed by Ciocan-Fontanine--Kapranov \cite{cfkquot,cfkhilbert} and Kern--Mann--Manolache--Picciotto \cite{kernmannmanolachepicciotto}.

It is already quite nontrivial to define a suitable category of `spaces with an atlas of compatible Kuranishi charts'.
We can take a first hint from the case of linear elliptic operators.
The kernel of an elliptic operator $L:E\to F$ on a manifold $M$ is the fiber product of vector spaces $C^\infty(M,E)\times_{C^\infty(M,F)}0$.
On the other hand, if we instead take this same fiber product in the $\infty$-category $\sfK^{\geq 0}(\Vect_\RR)$ of complexes supported in non-negative cohomological degree, then we obtain the two-term complex $[C^\infty(M,E)\smashxrightarrow LC^\infty(M,F)]$, which is quasi-isomorphic to its cohomology $[\ker L\smashxrightarrow 0\coker L]$.
That is, passing to a `derived' setting exactly captures the full complex associated to $L$, rather than just its kernel.
Now for non-linear equations, we need a non-linear generalization of this setup.

The $\infty$-category of \emph{derived smooth manifolds} $\Der$ is a non-linear analogue of $\sfK^{\geq 0}(\Vect_\RR)$.
It may be obtained from the category of smooth manifolds $\Sm$ by formally adjoining finite limits modulo transverse limits, see Definition \ref{derdef} and the surrounding discussion.
Derived smooth manifolds were introduced by Spivak \cite{spivakthesis,spivak}, and fall within the rather general framework of derived geometry introduced by Lurie \cite{luriethesis} and Toën--Vezzosi \cite{toenvezzosihagi,toenvezzosihagii}.
From the very beginning, one motivation for this theory was to capture intersection multiplicities using fiber products (the connection goes all the way back to Serre's intersection formula \cite[V.C.1]{serremultiplicities}).
A derived smooth manifold $X$ has a tangent complex $TX\in\Perf^{\geq 0}(X)$ (locally a finite complex of vector bundles supported in cohomological degrees $\geq 0$), and $X$ is called \emph{quasi-smooth} when $TX$ is supported in degrees $[0\;1]$.
Quasi-smooth derived smooth manifolds have a reasonable bordism theory, which coincides with that of ordinary smooth manifolds, by Spivak \cite{spivak}.
In the analytic/algebraic setting, there is a well developed theory of virtual fundamental classes for quasi-smooth derived schemes / analytic spaces \cite{behrendfantechi,schurgtoenvezzosi,khanvirtual}.
The enumerative significance of arbitrary derived smooth manifolds is less clear (though the enumerative theory does extend at least somewhat beyond the quasi-smooth setting, see for example Borisov--Joyce \cite{borisovjoyce}).
Atlases of Kuranishi charts and quasi-smooth derived smooth manifolds are related by work of Joyce \cite{joycedman,joycenewkuranishi,joycekuranishi2cat}.

Our `main result' Theorem \ref{representabilitytheorem} (though we cannot exactly call it a `result' as we only have space for a brief sketch of the proof) is that moduli functors of pseudo-holomorphic maps from compact smooth Riemann surfaces (or families thereof) are representable on the $\infty$-category of derived smooth manifolds (an independent proof has been announced by Pelle Steffens \cite{steffensannounce,steffensforthcoming}).
Our proof consists of three main steps and, remarkably, reveals the result to be a formal categorical consequence of standard Fredholm analysis (the inverse/implicit function theorem for smooth Banach manifolds).
The first step is the `standard Fredholm analysis' to show that the `regular locus' (where the linearized operator is surjective) is representable as a stack on smooth manifolds.
The second step (which is the heart of the proof) is to deduce, formally, from this fact, that the regular locus remains representable on all \emph{derived} smooth manifolds, by the same representing object (smooth manifold).
The third step is a standard (and trivial) transverse thickening argument to deduce representability from representability of the regular locus.
We should point out that this proof  is not particularly specific to the setting of pseudo-holomorphic curves: similar reasoning should apply to any non-linear elliptic Fredholm problem with two-term deformation theory.
While we expect Theorem \ref{representabilitytheorem} to remain valid for problems with elliptic deformation complex of arbitrary length, the extension of our arguments to treat that case would be nontrivial.

While enumerative applications were a key motivation for Theorem \ref{representabilitytheorem}, it does not concern these as such.
Rather, it must be combined with a suitable theory of virtual fundamental cycles (or bordism) for (certain, e.g.\ quasi-smooth) derived smooth manifolds (which we view as a separate problem).

For most interesting applications, it is necessary to consider moduli spaces of pseudo-holomorphic maps from families of degenerating curves (which are not covered by Theorem \ref{representabilitytheorem}).
Joyce \cite{joyceanalyticcorners} has proposed the framework of (what we call) `log smooth manifolds' (with origins in work of Melrose \cite{melroseicm,melroseconormal,melroseindex}) for formulating and proving representability of moduli spaces in this setting (there is also closely related work of Parker \cite{parkerexploded}).
We discuss this briefly at the end.

\subsection{Acknowledgements}

The author is grateful for comments from the anonymous referee, Kenji Fukaya, Tobias Ekholm, André Henriques, Pelle Steffens, Dennis Sullivan, and Runjie Hu.

\section{Linear elliptic equations}

Given an elliptic operator $L:E\to F$ on a compact manifold $M$, the kernel and cokernel are finite-dimensional.
More generally, given an elliptic complex $E^0\smashxrightarrow{L_0}E^1\smashxrightarrow{L_1}\cdots\smashxrightarrow{L_{n-1}}E^n$ on a compact manifold, its cohomology groups are finite-dimensional.
Rather than taking cohomology, it is somewhat better to say that these complexes are isomorphic in the $\infty$-category of complexes to finite complexes of finite-dimensional vector spaces (though there is little difference in this simple setting).

Things get more interesting if we consider families of elliptic operators.
Let $L:E\to F$ be a vertical elliptic operator on a proper submersion $\pi:Q\to B$ of smooth manifolds.
Suppose $L$ is surjective on the fiber over $b\in B$.
It is then surjective on nearby fibers, and its kernel forms a smooth vector bundle over a neighborhood of $b$ in the base $B$.
What do we mean by this last statement?
The most straightforward interpretation is that there is a natural way to define local trivializations and that one can check that the transition maps between these are smooth.
But there is a better approach using representable functors.
We ask: what should a map from a smooth vector bundle $V$ over $B$ to the bundle $\ker L$ be?
The answer is obvious: it should be a smooth map $\pi^*V\to E$ annihilated by $L$.
Now we may ask: is this functor representable?
Notice that now representability is a property, and it suffices to prove it locally (things glue automatically since representing objects are unique up to unique isomorphism).
So, we never have to compare local trivializations, rather we just have to construct local charts satisfying a \emph{property}.
Over the open subset of the base where $L$ is surjective, representability of this functor follows from standard elliptic analysis, and the fiber of the representing object at $b\in B$ is indeed the kernel of $L$ acting on sections over the fiber of $Q\to B$ over $b$.

Now what happens in general, when $L$ is not assumed surjective?
We are now searching for a two-term complex of smooth vector bundles `up to homotopy equivalence'.
More precisely, we are looking for an object of the 2-category $\Perf^{[0\;1]}(B)$ whose objects are two-term complexes of vector bundles $[V^0\to V^1]$ on $B$, whose 1-morphisms are chain maps, and whose 2-morphisms are chain homotopies, and there are no higher morphisms for degree reasons (to be completely precise, this describes a presheaf of 2-categories on $B$, and $\Perf^{[0\;1]}$ is its sheafification).
We should now write down a functor on $\Perf^{[0\;1]}(B)$ whose representing object will be the `derived pushforward' $\pi_*L$ of $L$.
This functor sends $V^\bullet\in\Perf^{[0\;1]}(B)$ to (the groupoid of cycles in) the complex of global sections of $\Hom(\pi^*V,[E\smashxrightarrow LF])$, whose differential is the sum (with appropriate signs) of $L$ and the differential $V^0\to V^1$.
It can be checked, essentially by reducing to the surjective case, that this functor is representable and that the fiber of its representing object over a point $b\in B$ is indeed quasi-isomorphic to $[C^\infty(Q_b,E)\smashxrightarrow{L_b}C^\infty(Q_b,F)]$ (equivalently $[\ker L_b\smashxrightarrow 0\coker L_b]$).
Notice that the local representing objects (two-term complexes of finite-dimensional vector bundles on $B$) are only unique up to unique (up to unique 2-isomorphism) 1-isomorphism in the 2-category $\Perf^{[0\;1]}$, which means the gluing data involved in patching them together is quite a bit more complicated than in the fiberwise surjective setting.
The formalism of representable functors is thus a significant advantage in this case, as it allows us to construct and reason with the derived pushforward $\pi_*L$ without manipulating the patching data directly.

This discussion generalizes readily to families of elliptic complexes.
There is an $\infty$-category $\Perf^{\geq 0}$ whose objects are described locally as finite complexes of vector bundles supported in non-negative cohomological degree and whose morphisms are given by the space of cycles in the usual mapping complex.
One can write down the analogous functor, whose representing object is called the derived pushforward of the elliptic complex.

\section{Derived smooth manifolds}

The first step in generalizing from linear to non-linear elliptic equations is to find the non-linear analogue of the $\infty$-category $\sfK^{\geq 0}(\Vect)$ of complexes of real vector spaces supported in non-negative cohomological degree.
This is the $\infty$-category $\Der$ of \emph{derived smooth manifolds}, defined by Spivak \cite{spivakthesis,spivak} following ideas of Lurie \cite{luriethesis} and Toën--Vezzosi \cite{toenvezzosihagi,toenvezzosihagii}, and developed further by Borisov--Noel \cite{borisovnoel}, Behrend--Liao--Xu \cite{behrendliaoxu}, Carchedi--Steffens \cite{carchedisteffens}, Carchedi \cite{carchedi}, and Taroyan \cite{taroyan}.
Joyce has defined a 2-category of `d-manifolds' \cite{joycedman,joycenewkuranishi,joycekuranishi2cat} which is closely related (if not literally equivalent) to the full subcategory of $\Der$ spanned by quasi-smooth objects.

In contrast to the aforementioned references, we will take an axiomatic approach to derived smooth manifolds (see Definition \ref{derdef}), which we believe minimizes the amount of technical input needed to get the theory off the ground (in particular, we do not need the notion of a homotopy $C^\infty$-ring).
From our perspective, the $\infty$-category of derived smooth manifolds is obtained from the category of smooth manifolds $\Sm$ by \emph{formally adjoining finite limits} modulo \emph{preserving finite transverse limits} (within the realm of \emph{topological $\infty$-sites}).

\begin{defi}[Topological $\infty$-site]
A topological $\infty$-site is an $\infty$-category $\sfC$ along with a functor $\abs\cdot:\sfC\to\Top$, with the property that for every diagram of solid arrows
\begin{equation*}
\begin{tikzcd}
*\ar[r]\ar[d,"1"']&\sfC\ar[d,"\abs\cdot"]\\
\Delta^1\ar[r,"\opemb"]\ar[ru,dashed]&\Top
\end{tikzcd}
\end{equation*}
in which the bottom arrow is an open embedding in $\Top$, there exists a dotted lift which is cartesian in the sense of \textup{\cite[\S 2.4.1]{luriehtt}}.
\end{defi}

An arrow in a topological $\infty$-site $\sfC$ which is cartesian over an open embedding in $\Top$ is called an open embedding in $\sfC$.
There is an equivalence $(\sfC\downarrow^\opemb X)=\Open(\abs X)$ for every object $X\in\sfC$.
We can thus make sense of the sheaf property for presheaves on $\sfC$ using this equivalence (namely, require the pullback to $\Open(\abs X)$ to be a sheaf in the usual sense for every $X\in\sfC$).
A topological $\infty$-site is called \emph{subcanonical} when Yoneda presheaves are sheaves (in other words, when a morphism out of $X\in\sfC$ amounts to certain \emph{local} data on $X$).
A topological $\infty$-site is called \emph{perfect} when it is subcanonical and every sheaf which is locally representable (has a cover by open substacks which are all representable) is representable (in other words, $\sfC$ is perfect when the result of gluing together objects of $\sfC$ along open sets is again an object of $\sfC$).

\begin{example}
The categories of topological spaces \textup{($\Top$)}, smooth manifolds \textup{($\Sm$)} (not necessarily paracompact or Hausdorff), complex analytic spaces, and schemes, are all perfect topological sites.
Another example of a topological site is the category $\Vect\rtimes\Top$ whose objects are pairs $(X,V)$ where $X\in\Top$ and $V/X$ is a vector bundle and in which a morphism $(X,V)\to(X',V')$ is a continuous map $f:X\to X'$ and a linear map $V\to f^*V'$ (the functor $\abs\cdot:\Vect\rtimes\Top\to\Top$ sends $(X,V)\mapsto X$).
\end{example}

\begin{defi}[Topological functor]
Let $\sfC$ and $\sfD$ be topological $\infty$-sites.
A \emph{topological functor} $\sfC\to\sfD$ is a functor $f:\sfC\to\sfD$ preserving open embeddings, together with a natural transformation $\pi:\abs{f(\cdot)}_\sfD\to\abs\cdot_\sfC$ which sends open embeddings to pullbacks.
A topological functor $(f,\pi)$ is called \emph{strict} when $\pi$ is a natural isomorphism.
\end{defi}

\begin{example}
The functor $\Sm\to\Sm$ given by $X\mapsto TX$ is a topological functor.
The forgetful functor $\Sm\to\Top$ is a strict topological functor.
The forgetful functor $\Vect\rtimes\Top\to\Top$ sending $(X,V)\mapsto X$ is a strict topological functor.
There is also a topological functor $\Vect\rtimes\Top\to\Top$ sending $(X,V)$ to the total space of $V$.
\end{example}

Now we define the $\infty$-category of derived smooth manifolds by the following set of axioms.

\begin{defi}[Derived smooth manifold]\label{derdef}
{\hskip 0ex minus 0.7ex}The perfect topological $\infty$-site $\Der$ together with the strict topological functor $\Sm\to\Der$ is defined by the following axioms:
\begin{description}
\item$\bullet$\ The functor $\Sm\to\Der$ is fully faithful and preserves finite products.
\item$\bullet$\ $\Der$ has finite limits, and every object of $\Der$ is locally isomorphic to a finite limit of smooth manifolds.
\item$\bullet$\ The functor $\abs\cdot:\Der\to\Top$ preserves finite limits.
\item$\bullet$\ For any $N\in\Sm$, the functor $\Hom(-,N):\Der\to\Shv(-)^\op\rtimes\Top$ sends finite cosifted limits (equivalently, totalizations of truncated cosimplicial objects) to relative limits over $\Top$.
\end{description}
\end{defi}

Note that the axioms determine quite directly the space of morphisms between any pair of finite limits of smooth manifolds in $\Der$, hence they determine the entire topological $\infty$-site $\Der$.

\begin{thm}[Universal property of derived smooth manifolds]
{\hskip 0ex minus 0.3ex}For any perfect topological $\infty$-site $\sfE$ with finite limits, the $\infty$-category of topological functors $\Der\to\sfE$ preserving finite limits is equivalent, via restriction, to the $\infty$-category of topological functors $\Sm\to\sfE$ preserving finite products.
\end{thm}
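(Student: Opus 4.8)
The plan is to exhibit an explicit quasi-inverse to the restriction functor $\mathrm{res}\colon F\mapsto F|_\Sm$. This $\mathrm{res}$ is well defined between the two $\infty$-categories because $\Sm\to\Der$ is a strict topological functor preserving finite products (first axiom), so composition with it carries a finite-limit-preserving topological functor $\Der\to\sfE$ to a finite-product-preserving topological functor $\Sm\to\sfE$. For the reverse direction we must, given a topological functor $G\colon\Sm\to\sfE$ preserving finite products, produce --- functorially in $G$ --- a topological functor $F=\mathrm{ext}(G)\colon\Der\to\sfE$ preserving finite limits with $F|_\Sm\simeq G$. Following the two clauses of the second axiom, $F$ is built in two stages: first over the full subcategory $\Der^{\mathrm{fl}}\subseteq\Der$ of objects that are \emph{globally} (not merely locally) finite limits of smooth manifolds --- which is not itself a topological $\infty$-site, but generates $\Der$ by gluing along open embeddings --- and then over all of $\Der$ by descent.

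The heart of the argument is the first stage, and its engine is the fourth axiom together with the Bousfield--Kan cosimplicial replacement: any finite limit of smooth manifolds is the totalization of a truncated cosimplicial object whose terms are finite products of smooth manifolds, hence again smooth manifolds. Feeding such a presentation into $\uHom(-,N)$ and invoking the fourth axiom computes $\uHom(X,N)$, for $X\in\Der^{\mathrm{fl}}$ and $N\in\Sm$, as a relative limit over $\Top$ of the representable sheaves $\Hom_\Sm(-,N)$ on the cosimplicial terms; one further limit over a presentation of the target then expresses the mapping sheaf $\uHom(X,Y)$ for $X,Y\in\Der^{\mathrm{fl}}$ purely in terms of $\Sm$ --- this is the precise content of the remark following Definition \ref{derdef}. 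The upshot is a universal property for $\Der^{\mathrm{fl}}$: it is the free completion of $\Sm$ under finite limits, compatibly with the functors to $\Top$, subject to preservation of finite products. From this, $G$ extends uniquely along $\Sm\to\Der^{\mathrm{fl}}$ to a finite-limit-preserving $F$ (on objects, $F(X):=\mathrm{Tot}\,G$ of a presentation, using that $\sfE$ has finite limits; on mapping sheaves, by transporting the recipe above along $G$, which is legitimate since $G$ preserves finite products and carries the topological structure). A byproduct worth recording: $G$ then automatically preserves finite \emph{transverse} limits, since a transverse fibre product of manifolds is already the honest --- not derived --- pullback in $\Der^{\mathrm{fl}}$; this reconciles the slogan ``adjoin finite limits modulo transverse limits'' with the hypothesis ``preserving finite products''.

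For the second stage, given arbitrary $X\in\Der$ one chooses an open cover by $U_i\in\Der^{\mathrm{fl}}$; the restriction of $F(U_i)$ to opens of $U_i$ makes sense via the topological structure of $\sfE$, and one defines $F(X)$ to be the object of $\sfE$ representing the sheaf on $\sfE$ glued from the $\Hom_\sfE(-,F(U_i))$ over $\abs X$ --- this sheaf is locally representable, hence representable because $\sfE$ is perfect. One then checks independence of the cover, functoriality, the topological-functor structure (assembled locally), and preservation of all finite limits, the last reducing --- locally, and using that $\abs\cdot$ preserves finite limits --- to the first-stage statement. Finally $\mathrm{res}\circ\mathrm{ext}\simeq\mathrm{id}$ holds by construction, while $\mathrm{ext}\circ\mathrm{res}\simeq\mathrm{id}$ because a finite-limit-preserving topological functor on $\Der$ is pinned down on $\Der^{\mathrm{fl}}$ by its restriction to $\Sm$ (universal property of $\Der^{\mathrm{fl}}$) and on the remainder of $\Der$ by locality; naturality of all these equivalences is checked in passing.

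The main obstacle is not any single step but the coherence pervading the first stage. The cosimplicial-replacement presentation of a finite limit is non-canonical, so ``define $F$ by the recipe'' has to be promoted to an honest $\infty$-functor, and the clean way is to phrase the universal property of $\Der^{\mathrm{fl}}$ as a genuine equivalence of $\infty$-categories, produced by a general theory of free finite-limit completions of topological $\infty$-sites \emph{relative to $\Top$}, rather than constructed by hand on objects and mapping sheaves. Developing that relative completion, and carrying the forgetful functors $\abs\cdot$ and the structure transformations $\pi$ coherently through all the totalizations, limits, and gluings, is where essentially all the work lies; granting it, the rest is formal.
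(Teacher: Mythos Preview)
The paper does not actually prove this theorem: it is stated without proof, preceded only by the one-sentence remark that ``the axioms determine quite directly the space of morphisms between any pair of finite limits of smooth manifolds in $\Der$, hence they determine the entire topological $\infty$-site $\Der$,'' and followed by a citation to Carchedi--Steffens for a similar universal property. There is thus no detailed argument in the paper to compare your proposal against.

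That said, your sketch is entirely consistent with the hint the paper does provide. Your two-stage strategy --- first extend over objects that are global finite limits of smooth manifolds using the fourth axiom of Definition~\ref{derdef} together with the decomposition of finite limits into finite products followed by truncated cosimplicial totalizations, then glue using perfectness of $\sfE$ --- is precisely the route suggested by that remark and by the shape of the axioms. Your identification of the real obstacle is also accurate: nothing in any individual step is deep, but promoting the first stage to an honest equivalence of $\infty$-categories (a free finite-limit completion \emph{relative to $\Top$}) requires coherent handling of the non-canonical presentations, and this is where the actual work lies. The paper does not carry this out; your proposal should be read as a faithful elaboration of what it leaves implicit, with the honest caveat you already give.
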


A similar universal property was proven by Carchedi--Steffens \cite{carchedisteffens}.

A diagram of vector spaces is called \emph{transverse} when its limit in $\Vect$ is preserved by the inclusion $\Vect\hookrightarrow\sfK^{\geq 0}(\Vect)$.
A diagram of smooth manifolds is called \emph{transverse} when for every point of its topological limit, the induced diagram of tangent spaces is transverse.

\begin{lem}
A topological functor $\Sm\to\sfE$ preserves finite transverse limits iff it preserves finite products.
\end{lem}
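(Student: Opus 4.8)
The plan is to dispatch one implication formally and to reduce the other to the case of a transverse fiber product, where the content is the submersion normal form together with a gluing argument. For the formal implication: if $F$ preserves finite transverse limits then it preserves finite products, since every finite product --- including the empty one, i.e.\ the terminal object --- is a transverse limit. Indeed its indexing diagram is discrete, so the diagram of tangent spaces at any point of the topological limit is discrete, and the limit of a discrete diagram of vector spaces, namely the direct sum, is computed identically in $\Vect$ and in $\sfK^{\geq 0}(\Vect)$.

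For the converse, suppose $F\colon\Sm\to\sfE$ preserves finite products, hence terminal objects. Any finite limit of smooth manifolds can be built as an iterated fiber product of finite products, and one checks --- inducting on the number of objects of the indexing category, peeling one object off at a time --- that a \emph{transverse} such limit can be built using only \emph{transverse} fiber products together with finite products. Hence it suffices to show $F$ preserves a transverse fiber product $M\times_P N$, along maps $f\colon M\to P$ and $g\colon N\to P$.

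The decisive local input is the submersion normal form. Near a point of $M\times_P N$, fix a chart $P\supseteq P_0\cong\RR^n$ about its image; then, over $P_0$, the locus $M\times_P N$ is cut out inside the open $f^{-1}(P_0)\times g^{-1}(P_0)$ of $M\times N$ as the zero set of $s(m,n)=f(m)-g(n)$, which is a submersion there by $f\pitchfork g$. Passing to a smaller open, $s$ may be brought to the normal form $W\times B\to B$, $(w,b)\mapsto b$, with $B\subseteq\RR^n$ a ball about $0$; on this open $M\times_P N$ is then $W\times\{0\}$, and all its structure maps to $M$, $N$, $P$ become evident projections. On such an open $F$ preserves the fiber product because it preserves products: $F(W\times B)=F(W)\times F(B)$, the point $0\in B$ and the projection $s$ pass to the corresponding morphisms in $\sfE$, and a short computation then identifies the canonical comparison $F(M\times_P N)\to F(M)\times_{F(P)}F(N)$, restricted to this open, with an identity.

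Covering $M\times_P N$ by such opens, the comparison map restricts to an isomorphism over an open cover; globalizing is the step I expect to be the main obstacle. One uses that $F$ is a topological functor: opens on a topological site are classified by opens of the underlying space, so the chosen opens really do cover $F(M\times_P N)$, and the corresponding opens of $F(M)\times_{F(P)}F(N)$ cover it because, via the structure transformation $\abs{F(\cdot)}\to\abs\cdot$, the underlying space of $F(M)\times_{F(P)}F(N)$ maps into the fiber product locus $M\times_P N\subseteq M\times N$. Since $\sfE$ is perfect, a morphism restricting to an isomorphism over an open cover of its target is an isomorphism, and we are done. (Alternatively, one may factor $F$ through the finite-limit-preserving extension $\widetilde F\colon\Der\to\sfE$ furnished by the universal property and reduce to the assertion that $\Sm\hookrightarrow\Der$ preserves finite transverse limits, which is immediate from the axioms of $\Der$: the functor $\abs\cdot\colon\Der\to\Top$ preserves finite limits, and a derived smooth manifold whose tangent complex is concentrated in degree $0$ is an ordinary manifold.)
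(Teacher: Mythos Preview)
The paper states this lemma without proof, so there is nothing to compare against directly; let me assess your argument on its own merits.

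The forward implication and the reduction to a single transverse fiber product are fine. The gap is in the local step. You pass to submersion normal form for $s=f-g$, obtaining an open of $M\times N$ diffeomorphic to $W\times B$ with $s$ the projection and $M\times_P N$ identified with $W\times\{0\}$. You then assert that ``all its structure maps to $M$, $N$, $P$ become evident projections'' and that therefore the comparison $F(M\times_P N)\to F(M)\times_{F(P)}F(N)$ restricts to an identity. But the normal-form diffeomorphism mixes the $M$ and $N$ factors, so the projections $W\to M$ and $W\to N$ are \emph{not} coordinate projections in the $W\times B$ chart unless $f$ or $g$ was already a submersion. What your local model actually exhibits is that $F$ preserves the fiber product $(W\times B)\times_B\{0\}$, which is a pullback over $B\cong\RR^n$, not over $P$; you have not related this to $F(M)\times_{F(P)}F(N)$ restricted to the open.

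The fix is to use product-preservation \emph{before} localizing: since $F$ preserves finite products, $F(M)\times_{F(P)}F(N)\cong F(M\times N)\times_{F(P\times P)}F(P)$, and after choosing a chart $P_0\cong\RR^n$ the diagonal becomes $P_0\times\{0\}\hookrightarrow P_0\times\RR^n$, so the target reduces to $F(M\times N)\times_{F(\RR^n)}F(*)$ along $F(s)$. Now your submersion normal form applies directly, the opens are opens of $M\times N$ (pulling back unproblematically to both sides via $F(M\times N)$), and the gluing goes through.

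Your parenthetical alternative via the universal property of $\Der$ is circular in spirit: the lemma is the reason the universal property can be phrased in terms of finite products rather than finite transverse limits, so invoking that extension to prove the lemma presupposes what is to be shown.
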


As an example application of the universal property of $\Sm\to\Der$, we note that the tangent functor $T:\Sm\to\Sm$ preserves finite products, hence extends uniquely to a functor $T:\Der\to\Der$.
This functor is right adjoint to $(-\times\tau):\Der\to\Der$ where $\tau$ is the derived zero set (fiber product in $\Der$) of the function $x\mapsto x^2$ (we also call $\tau$ the `universal tangent vector').
That is, $\Hom(X\times\tau,Y)=\Hom(X,TY)$ for derived smooth manifolds $X$ and $Y$.

The tangent complex of a derived smooth manifold $M$ at a point $x\in M$ is an object of $\sfK^{\geq 0}(\Vect_\RR)$ with finite-dimensional cohomology.
This cohomology detects the local structure: $T_{X/Y}$ is supported in degree $d$ precisely when $X\to Y$ is locally modelled on a pullback of the $d$th diagonal of $\RR^k\to *$ (equivalently, the $(d-1)$th diagonal of $*\to\RR^k$), and every map of derived smooth manifolds factors locally into a composition of such maps.
A derived smooth manifold whose tangent complex is supported in degrees $[0\;1]$ is called \emph{quasi-smooth}; for example, the derived zero set $f^{-1}(0)$ of a smooth function $f:\RR^n\to\RR^m$ (a Kuranishi chart) is quasi-smooth.

\section{Moduli stacks of pseudo-holomorphic maps}\label{modulistackssection}

Let us now make precise what we mean by a \emph{pseudo-holomorphic moduli problem} $\wp$ and its associated \emph{moduli stack} $\uHol(\wp)$ of solutions.
A quick note on terminology: we use the term `$\sfC$-stack' to mean any object of the $\infty$-category $\Shv(\sfC)$ of all sheaves (of spaces, aka $\infty$-groupoids) on the topological $\infty$-site $\sfC$ (for example topological stacks $\Shv(\Top)$, smooth stacks $\Shv(\Sm)$, and derived smooth stacks $\Shv(\Der)$).

At the simplest level, we could consider pseudo-holomorphic maps from a compact Riemann surface $C$ to an almost complex manifold $X$.
The moduli problem $\wp$ is the pair $(C,X)$, and its \emph{solutions} are pseudo-holomorphic maps $u:C\to X$.
In many contexts it is useful to generalize from maps to \emph{sections}.
This means we fix a pseudo-holomorphic submersion $W\to C$ over a compact Riemann surface $C$, and we consider solutions to be pseudo-holomorphic sections $u:C\to W$.

The \emph{moduli stack} $\uHol(\wp)$ of solutions to a pseudo-holomorphic moduli problem $\wp$ associates to each `space' $Z$ the set of `families' of solutions of $\wp$ parameterized by $Z$.
There are various sorts of moduli stacks depending on what we allow $Z$ to be, which we indicate by adorning $\uHol(\wp)$ with a subscript.

The simplest moduli stack to define is the \emph{smooth moduli stack} $\uHol(\wp)_\Sm$, which is when $Z$ is a smooth manifold.
A family of solutions of the moduli problem $\wp=(C,X)$ parameterized by $Z$ is simply a smooth map $Z\times C\to X$ whose restriction to each slice $z\times C$ is pseudo-holomorphic.
When $\wp$ is a section problem $(W\to C)$, a map $Z\to\uHol(\wp)_\Sm$ is then a smooth lift
\begin{equation*}
\begin{tikzcd}
{}&W\ar[d]\\
Z\times C\ar[r,"\pi_C"]\ar[ru]&C
\end{tikzcd}
\end{equation*}
whose restriction to each slice $z\times C$ is pseudo-holomorphic.

This object $\uHol(\wp)_\Sm$ is a \emph{smooth stack}, by which we mean an object of $\Shv(\Sm)$, the category of sheaves on $\Sm$, the category of smooth manifolds.
For a given smooth stack, we can ask whether it is \emph{representable}, meaning isomorphic in $\Shv(\Sm)$ to a functor of the form $\Hom(-,A)$ for some smooth manifold $A$.
Representability of $\uHol(\wp)_\Sm$ means, concretely, that there exists a smooth manifold $A$ and a \emph{universal family} $A\to\uHol(\wp)_\Sm$ (i.e.\ a family of pseudo-holomorphic maps/sections parameterized by $A$) such that for every smooth manifold $Z$, a family of pseudo-holomorphic maps/sections parameterized by $Z$ is the pullback of the universal family by a \emph{unique} map $Z\to A$.
It is not hard to check that such a universal object and universal family is unique up to unique isomorphism \emph{if it exists}.
If it exists, then it certainly deserves to be called the moduli space of pseudo-holomorphic maps/sections.

Standard non-linear elliptic Fredholm analysis shows that the smooth moduli stack is representable \emph{over the open set where the linearized operator is surjective} (let us call this the `regular locus' $\uHol(\wp)^\reg\subseteq\uHol(\wp)$, which is an open substack since surjectivity is an open condition for Fredholm operators).
To satisfactorily describe the entire moduli space (not just its regular locus), we need to introduce more complicated moduli stacks.

Let us next explain the \emph{topological moduli stack} $\uHol(\wp)_\Top$.
In other words, we should explain what it means to have a family of solutions to a pseudo-holomorphic moduli problem $\wp$ parameterized by a topological space $Z$.
The answer takes the same basic form as when $Z$ is a smooth manifold, namely we ask for a map $Z\times C\to X$ or to $W$.
At a minimum, we would certainly want this map should be smooth on the slices $z\times C$ and jointly continuous.
In fact, we want that all derivatives in the $C$ direction should exist and be jointly continuous.
These conditions can be stated succinctly by introducing the category $\Top\Sm$ of `topological-smooth spaces'.
Given a topological space $Z$ and an integer $n$, the (formal) product $Z\times\RR^n$ is an example of a topological-smooth space.
A continuous-smooth map $Z\times\RR^n\to Z'$ (possibly defined on just an open set) is a continuous map which is locally constant on every slice $z\times\RR^n$.
A continuous-smooth map $Z\times\RR^n\to\RR$ is a map whose derivatives to all orders in the $\RR^n$ direction exist and are jointly continuous.
Now the category $\Top\Sm$ is defined by taking atlases (an object is a topological space with an atlas of charts from open subsets of various $Z\times\RR^n$ with continuous-smooth transition maps, and a morphism is a continuous map which in every chart is continuous-smooth).
The topological moduli stack $\uHol(\wp)_\Top$ is (under mild conditions) representable.
Concretely, it is the set of pseudo-holomorphic maps/sections equipped with the topology of smooth convergence on compact subsets of the domain.

Now the moduli space which gives a truly satisfactory answer to our motivating question is the \emph{derived smooth moduli stack} $\uHol(\wp)_\Der$, which classifies families of pseudo-holomorphic maps parameterized by derived smooth manifolds.
To define this moduli stack, we just need to say when a morphism of derived smooth manifolds $Z\times C\to X$ is pseudo-holomorphic in the $C$ direction.
The point is that there is a tangent functor $T:\Der\to\Der$, which turns such a map into a map $TZ\times TC\to TX$.
We can now restrict it to $Z\times TC\to TX$, which is thus a section of the vector bundle $\Hom(TC,u^*TX)$ over $Z\times C$, and we can take the $(0,1)$-part and require it to vanish (which we should note is extra data, not a property, in this higher categorical context).
Note that this is \emph{not} a `fiberwise' constraint: there are functions $Z\to\RR$ which are nonzero yet whose pullback under every map $*\to Z$ is zero.
For example, a pseudo-holomorphic map $\tau\times C\to X$ (where $\tau$ is the universal tangent vector) is a pseudo-holomorphic map $C\to X$ along with first order deformation preserving pseudo-holomorphicity.
We emphasize that the definition of $\uHol(\wp)_\Der$ is entirely `synthetic'/`diagrammatic': all we need is the $\infty$-category $\Der\supseteq\Sm$ and its tangent functor, and some compatibilities with the notion of tangent space of smooth manifolds (at no point in defining this functor do we need to describe explicitly a morphism of derived smooth manifolds).

The above discussion generalizes quite easily to \emph{parameterized} moduli problems (which is the sort which usually appears in practice).
We consider a base `parameter' space $B$ together with submersions $W\to C\to B$ where $T_{W/B}$ and $T_{C/B}$ have complex structure, the map $W\to C$ is almost complex relative $B$, and $C\to B$ is proper of relative dimension two.
A map from $Z$ to the moduli stack $\uHol_B(C,W)$ is now a diagram
\begin{equation*}
\begin{tikzcd}
{}&W\ar[d]\\
C\times_BZ\ar[r]\ar[d]\ar[ru]&C\ar[d]\\
Z\ar[r]&B
\end{tikzcd}
\end{equation*}
where the diagonal map $C\times_BZ\to W$ is pseudo-holomorphic in the $T_{C/B}$ direction.
This `diagrammatic' definition works to define the topological, smooth, and derived smooth moduli stacks as above.
It is important to note here that we may take $B$ to be any topological, smooth, or derived smooth \emph{stack}: this generality is needed to describe most moduli spaces of interest, as in the following examples.

\begin{example}
A parameterized moduli problem over parameter space $B=*$ is the same as a moduli problem in the initial (unparameterized) sense.
\end{example}

\begin{example}
A parameterized moduli problem over parameter stack $B=*/G$ for a Lie group $G$ is necessarily of the form $W/G\to C/G\to */G$ where $W\to C$ is a $G$-equivariant pseudo-holomorphic section problem.
The moduli stack of the parameterized problem $\uHol_{*\!/\!G}(C/G,W/G)$ is the quotient $\uHol(C,W)/G$.
\end{example}

\begin{example}
Let $C\to B$ be the universal family over the moduli stack $B$ of compact smooth Riemann surfaces.
The moduli stack $\uHol_B(C,X)$ classifies (families of) `maps from compact smooth Riemann surfaces to $X$ modulo reparameterization'.
\end{example}

It is possible to identify the `tangent space' of a moduli stack of pseudo-holomorphic curves by formal reasoning at the level of moduli functors.
The tangent functor $T:\Der\to\Der$ induces a sheaf left Kan extension functor $T_!:\Shv(\Der)\to\Shv(\Der)$.
Now $T_!$ is alternatively the pullback $(-\times\tau)^*:\Shv(\Der)\to\Shv(\Der)$ (indeed, they agree on $\Der$ and are both cocontinuous, where $\tau$ denotes the derived zero set of $x^2:\RR\to\RR$).
Now the result of applying $(-\times\tau)^*$ to the moduli stack $\uHol(\wp)_\Der$ of $\wp=(W\to C\to B)$ may be identified quite directly with the moduli stack $\uHol(T\wp)_\Der$ associated to a certain `tangent moduli problem' $T\wp=(T_{W/C}\times_BTB\to C\times_BTB\to TB)$ depending on a choice of connection on $W\to B$.
Now the map $\uHol(T\wp)\to\uHol(\wp)$ is just a `relative' linear elliptic moduli problem.
We have thus shown that, for essentially formal reasons, the tangent space to the moduli stack of pseudo-holomorphic curves is given by the associated family of linear elliptic operators obtained by linearizing in the usual way.
It is remarkable that we can formulate and prove this statement \emph{before} we show that the moduli stack itself is actually representable!

\section{Representability}

We can now formulate and sketch the proof of our main `result' (an independent proof of which has been announced by Pelle Steffens \cite{steffensannounce,steffensforthcoming}).
It depends on two key results which we state and discuss afterwards.

\begin{thm}[Derived Regularity Theorem]\label{representabilitytheorem}
Let $W\to C\to B$ be a pseudo-holomorphic section problem over a derived smooth stack $B$, meaning $W\to C\to B$ are submersions in $\Shv(\Der)$, the map $B\to C$ is proper of relative dimension two, and $W\to C$ is pseudo-holomorphic with respect to specified complex structures on $T_{C/B}$ and $T_{W/B}$.
In this case, the map
\begin{equation*}
\uHol_B(C,W)\to B
\end{equation*}
is representable, and the tautological comparison map
\begin{equation*}
(\Der\to\Top)_!\uHol_B(C,W)_\Der\to\uHol_B(C,W)_\Top
\end{equation*}
is an isomorphism.
\end{thm}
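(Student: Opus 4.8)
The plan is to follow the three‑step strategy sketched in the introduction. First I would reduce to the case $B\in\Sm$: representability of $\uHol_B(C,W)\to B$ is tested against maps $Z\to B$ with $Z$ a derived smooth manifold, and $\uHol_B(C,W)\times_BZ\cong\uHol_Z(C\times_BZ,W\times_BZ)$, so one may take $B$ to be a derived smooth manifold and then, covering it by finite limits of smooth manifolds, a smooth manifold; when $Z\in\Sm$ the pullbacks $C\times_BZ$, $W\times_BZ$ are honest smooth manifolds (since $C\to B$, $W\to B$ are submersions), so a classical Fredholm problem is in force. Write $\wp=(W\to C\to B)$ with $B\in\Sm$. \textbf{Step 1} is standard non‑linear elliptic Fredholm analysis: fixing a Sobolev (or Hölder) completion of the sections of $W\to C$, the operator $\bar\partial^{0,1}$ becomes a smooth section of a Banach vector bundle whose linearization $D_u$ at a solution is Fredholm, and the regular locus $\uHol(\wp)^\reg$ is the open substack where $D_u$ is surjective. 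The inverse function theorem for Banach manifolds with parameters in a test manifold $Z$, together with elliptic regularity and bootstrapping (to return from the completion to $C^\infty$), shows that $\uHol(\wp)^\reg_\Sm$ is \emph{locally} representable over the topological space $\uHol(\wp)^\reg_\Top$ of solutions with the topology of $C^\infty$‑convergence on compact subsets; since $\Sm$ is a perfect topological site, $\uHol(\wp)^\reg_\Sm$ is then representable by a smooth manifold $\cM^\reg$ with $\abs{\cM^\reg}=\uHol(\wp)^\reg_\Top$, and the same analysis identifies $\uHol(\wp)^\reg_\Top$ with this space.

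\textbf{Step 2} is the crux. Pulling back the universal family --- which makes sense over $\Der$ because $\cM^\reg\in\Sm$ --- gives a tautological map $\Hom(-,\cM^\reg)\to\uHol(\wp)^\reg_\Der$ of sheaves on $\Der$, and I would show it is an isomorphism. By the axioms for $\Der$ every derived smooth manifold is locally a finite limit of smooth manifolds, and every finite limit is built from finite products and totalizations of truncated cosimplicial objects (e.g.\ a pullback is the partial totalization of the Čech--cobar object with product terms); since both sides are sheaves, it is enough to evaluate on such totalizations $Z=\lim_{[n]\in\Delta_{\leq k}}Z^n$ with the $Z^n$ products of smooth manifolds. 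The left side is controlled here by the fourth axiom, which converts the totalization into a relative limit over $\Top$ of $\Hom(Z^n,\cM^\reg)$, the product terms being handled by finite‑product‑preservation of $\Sm\to\Der$. The right side is assembled by finite limits from the section functors $Z\mapsto\uSec_C(N)(Z)=\Hom_{/C}(C\times_BZ,N)$ ($N$ a smooth manifold: the target $W$, an auxiliary bundle $E$ over $W$ built from the vertical $(0,1)$‑forms and the vertical tangent bundle, and its zero section) together with the tangent functor $T:\Der\to\Der$ used to extract $\bar\partial^{0,1}u$; since $C\times_B(-)$ preserves limits and $T$ (being right adjoint to $-\times\tau$) preserves limits, they commute with the totalization, and $\Hom_{/C}(-,N)$ again converts it to a relative limit over $\Top$ by the fourth axiom. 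Both sides are thus relative limits over $\Top$ of their values at the $Z^n$, and for $Z^n\in\Sm$ one has $\uHol(\wp)^\reg_\Der(Z^n)=\uHol(\wp)^\reg_\Sm(Z^n)$: the fiber products $C\times_BZ^n$ are transverse hence already computed in $\Sm$, morphisms between smooth manifolds agree in $\Sm$ and $\Der$, and the ``extra data'' witnessing pseudo‑holomorphicity is unique when nonempty because mapping spaces between smooth manifolds are discrete. So, term by term and compatibly with the comparison map, both totalizations reduce to $\Hom_\Sm(Z^n,\cM^\reg)=\uHol(\wp)^\reg_\Sm(Z^n)$ of Step 1 and therefore agree. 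Because the fourth axiom produces these identifications \emph{relative to} $\Top$ and $\abs\cdot:\Der\to\Top$ preserves finite limits, applying $(\Der\to\Top)_!$ simultaneously yields the $\Top$‑comparison over the regular locus, using that $(\Der\to\Top)_!h^\Der_X\cong h^\Top_{\abs X}$ --- which holds by Yoneda, since $(\Der\to\Top)_!$ is left adjoint to $F\mapsto(X\mapsto F(\abs X))$ --- together with $\abs{\cM^\reg}=\uHol(\wp)^\reg_\Top$.

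\textbf{Step 3} handles the general case by a transverse thickening. Working near a solution $u_0$, choose a finite‑dimensional $K\cong\RR^d$ of sections surjecting onto $\coker D_{u_0}$ and form the thickened moduli problem parameterizing pairs $(u,e)$, $e\in K$, with $\bar\partial^{0,1}u=\Phi(e)$ for a fixed linear $\Phi$ realizing $K$ as a complement; locally (up to a transverse fiber product) this is again a pseudo‑holomorphic section problem, now with surjective linearization near $u_0$, so by Steps 1--2 its moduli stack is representable near $u_0$ by a smooth manifold $\widetilde\cM$, compatibly with the $\Top$‑comparison. Forgetting $e$ gives a smooth map $\widetilde\cM\to K$, and $\uHol(\wp)_\Der$ is, near $u_0$, its derived zero locus $\widetilde\cM\times_K0$ --- a Kuranishi chart, in particular a quasi‑smooth object of $\Der$. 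Since $\abs\cdot$ preserves finite limits, the underlying space of this chart is the honest zero locus, which is $\uHol(\wp)_\Top$ near $u_0$; and since $\Der$ is a perfect topological site, these charts glue to a derived smooth manifold (a derived smooth stack over $B$ in general) representing $\uHol(\wp)_\Der$, and the $\Top$‑comparison, being local and an isomorphism on each chart, is an isomorphism. I expect the main obstacle to be \textbf{Step 2}: verifying that the categorical axioms characterizing $\Der$ are strong enough to transport the Banach‑manifold conclusion of Step 1 across the passage from $\Sm$ to $\Der$ with no further analytic input --- concretely, that the ``synthetic'' functor $\uHol(\wp)^\reg_\Der$ really does unwind into section and tangent functors compatible with truncated totalizations and with $\abs\cdot$.
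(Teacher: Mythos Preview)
Your three-step architecture matches the paper's, and Steps 1 and 3 are essentially the same as there. The differences, and a genuine gap, lie in the reduction to smooth $B$ and in Step 2.

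For the reduction: writing $B$ locally as a finite limit of smooth manifolds does not make $B$ a smooth manifold, so you cannot simply ``cover and pass to $\Sm$''. What is actually needed is that the \emph{data} $(W\to C\to B)$ are, locally on $B$, pulled back from a pseudo-holomorphic section problem over a smooth base. The paper obtains this from Proposition~\ref{leftkandercomparison} (applied to the submersions $C\to B$ and $W\to C$, viewed as sections of appropriate stacks), not from the bare local structure of $\Der$.

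For Step 2: your appeal to the fourth axiom for the right-hand side is where the argument breaks. The axiom says that $\Hom(-,N)$ sends a truncated cosimplicial totalization $Z=\lim Z^\bullet$ to a relative limit in $\Shv(-)^\op\rtimes\Top$, i.e.\ a colimit of sheaves \emph{on $\abs{C\times_BZ^\bullet}$}. But $\uSec_B(C,W)(Z)$ is a sheaf on $\abs Z$, obtained by pushing forward along the proper map $\abs{C\times_BZ}\to\abs Z$, and proper pushforward does not preserve arbitrary colimits of sheaves. The paper isolates exactly this point in the proof of Proposition~\ref{leftkandercomparison}: filtered colimits are preserved by proper base change, and the remaining pushouts are preserved because the sheaves in question are \emph{soft} (partitions of unity). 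This is the one place where smoothness, as opposed to analyticity, enters essentially; without it your Step 2 does not go through. The paper then packages Step 2 differently from you: rather than analysing $\uHol^\reg_\Der$ directly on totalizations, it writes $\uHol_B(C,W)$ as a fiber of $\uSec_B(C,W)\to\uSec(C_0,H_0)$ (after trivializing $C\to B$ by Ehresmann), observes this map is a submersion on the regular locus by Step 1, applies Proposition~\ref{leftkandercomparison} to both section stacks, and invokes Lemma~\ref{rkanpullbacksheaf} (sheaf left Kan extension preserves submersive pullbacks). Your direct approach could be made to work, but only after supplying the proper-pushforward/softness argument you omitted; the paper's factorisation has the advantage of localising that argument in a single reusable statement about $\uSec$.
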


\begin{proof}
We begin with some formal reasoning to reduce to the case that $B$ is a smooth manifold.
Formation of moduli stacks is compatible with pullback, so representability of $\uHol_B(C,W)\to B$ reduces immediately to the case that $B$ is a derived smooth manifold.
Given representability, formation of the comparison map is also compatible with pullback by Lemma \ref{rkanpullbacksheaf}, hence also reduces to the case $B$ is a derived smooth manifold.
Using Proposition \ref{leftkandercomparison}, one can show that every pseudo-holomorphic moduli problem over a derived smooth manifold $B$ is locally pulled back from a smooth manifold.
It thus suffices to consider the case $B$ is a smooth manifold.

We first claim that $\uHol_B(C,W)_\Sm^\reg$ is representable and that the comparison map $(\Sm\to\Top)_!\uHol_B(C,W)_\Sm^\reg\to\uHol_B(C,W)_\Top^\reg$ is an isomorphism (recall $\uHol^\reg\subseteq\uHol$ denotes the open substack where the linearized operator is surjective).
This is an application of standard non-linear elliptic Fredholm analysis using Newton--Picard iteration (formally the same as inverse function theorem).
More precisely, Newton--Picard iteration shows that certain natural `linear projections' $\lambda:\uHol_B(C,W)\to\RR^k$ are local isomorphisms on $\uHol_B(C,W)^\reg_\Top$.
It is not difficult to show that the local inverse $\RR^k\to\uHol_B(C,W)_\Top^\reg$ is continuously differentiable, hence that the linear projection is an isomorphism of stacks on $C^1$-manifolds.
Smoothness may be obtained formally by induction, by considering the `tangent moduli problem' (which is another elliptic partial differential equation).

Our second (and most significant) step is to show that, as a formal consequence of the first step (representability of $\uHol_B(C,W)_\Sm^\reg$), the comparison map $(\Sm\to\Der)_!\uHol_B(C,W)_\Sm^\reg\to\uHol_B(C,W)_\Der^\reg$ is an isomorphism (that is, families of regular pseudo-holomorphic sections over derived smooth manifolds are classified by the same smooth manifold classifying such families over smooth manifolds).
The underlying engine behind this fact is Proposition \ref{leftkandercomparison}, which says that the analogous assertion holds for the stacks of all sections.
To deduce the result for $\uHol_B(C,W)$, observe that $\uHol_B(C,W)$ is a fiber of $\uSec_B(C,W)\to\uSec(C_0,H_0)$ where $\uSec$ (resp.\ $\uSec_B$) is the stack of (resp.\ parameterized) smooth sections (no pseudo-holomorphicity imposed) and we have smoothly trivialized $C=C_0\times B\to B$ via Ehresmann and identified $\overline{T^*C}\otimes T_{W/C}$ with the pullback of a vector bundle $H_0/C_0$.
This map $\uSec_B(C,W)\to\uSec(C_0,H_0)$ is a submersion over its fiberwise regular locus by the previous paragraph.
Now we appeal to the fact that submersive pullbacks are preserved by left Kan extension $(\Sm\to\Der)_!$ by Proposition \ref{rkanpullbacksheaf}.

Having proven the result over the regular locus, we can deduce it everywhere using a standard thickening argument.
We already noted in the first paragraph of the proof that our desired conclusion is preserved under pullback.
Thus if our moduli problem $(W\to C\to B)$ is the pullback of another moduli problem $(\tilde W\to\tilde C\to\tilde B)$ under a map $B\to\tilde B$, then our desired result for $\uHol_B(C,W)$ holds over the open substack $\uHol_{\tilde B}(\tilde C,\tilde W)^\reg\times_{\tilde B}B\subseteq\uHol_B(C,W)$.
Now we just need to argue that $\uHol_B(C,W)$ may be covered by open substacks of this form, which is easy (take product of $(W\to C\to B)$ with $\RR^k$ and modify the holomorphic structure as a function of the $\RR^k$-coordinate, keeping it fixed at zero, so as to make any desired point of $\uHol_B(C,W)=\uHol_{\tilde B}(\tilde C,\tilde W)\times_{\RR^k}0$ regular inside $\uHol_{\tilde B}(\tilde C,\tilde W)$).
\end{proof}

The following result provides a powerful way to reduce statements about derived smooth manifolds to statements about smooth manifolds.

\begin{prop}\label{leftkandercomparison}
The comparison map
\begin{equation*}
(\Sm\to\Der)_!\uSec_B(C,W)_\Sm\to\uSec_B(C,W)_\Der
\end{equation*}
is an isomorphism for $C\to B$ proper.
\end{prop}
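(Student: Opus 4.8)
\medskip\noindent\emph{Proof proposal.}\ The plan is to check the comparison map is an isomorphism after evaluation on a basis for the topology of $\Der$. Both its source and its target are sheaves on $\Der$ (the target by construction, the source because $(\Sm\to\Der)_!$ lands in $\Shv(\Der)$ by definition), so by Definition \ref{derdef} it suffices to prove it is an equivalence on every $X\in\Der$ which is a finite limit of smooth manifolds. Writing such an $X$ via its cobar construction, and using that finite products of smooth manifolds are smooth manifolds, one reduces further to the case that $X=\mathrm{Tot}\,M^\bullet$ is the totalization of a truncated cosimplicial smooth manifold $M^\bullet$.

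First I would compute the target. Unwinding the definition from Section \ref{modulistackssection}, $\uSec_B(C,W)_\Der$ sends $X$ to the space of sections of $W\to C$ over $C\times_B X$, i.e.\ the fiber of $\Hom_\Der(C\times_B X,W)\to\Hom_\Der(C\times_B X,C)$ over the structure map. Since pullbacks commute with totalizations, $C\times_B\mathrm{Tot}\,M^\bullet=\mathrm{Tot}(C\times_B M^\bullet)$, a totalization of smooth manifolds; as $W$ and $C$ are smooth manifolds, the fourth axiom of Definition \ref{derdef} then identifies $\Hom_\Der(\mathrm{Tot}(C\times_B M^\bullet),W)$ with $\mathrm{Tot}_n\Hom_\Sm(C\times_B M^n,W)$, and likewise for $C$; passing to fibers,
\begin{equation*}
\uSec_B(C,W)_\Der(\mathrm{Tot}\,M^\bullet)=\mathrm{Tot}_n\,\uSec_B(C,W)_\Sm(M^n).
\end{equation*}

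The heart of the argument is the matching assertion for the source: that $(\Sm\to\Der)_!$ commutes with this totalization when applied to $\uSec_B(C,W)_\Sm$; equivalently, via the left Kan extension formula, that every section of $W\to C$ over $C\times_B\mathrm{Tot}\,M^\bullet$ is pulled back from a section over $C\times_B N$ along some map $\mathrm{Tot}\,M^\bullet\to N$ with $N$ a smooth manifold, uniquely after passing to a further such $N$. Here properness of $C\to B$ enters: it lets us cover $C$ by finitely many coordinate charts over which $W$ trivializes, exhibiting $\uSec_B(C,W)_\Sm$, by finite submersive descent along that cover, as assembled from the elementary mapping sheaves $\uHom_B(U_\alpha,F)$ (with $U_\alpha\subseteq\RR^d$ open and $F$ a fiber) through submersive fiber products and colimits. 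Since $(\Sm\to\Der)_!$ is cocontinuous and, by Proposition \ref{rkanpullbacksheaf}, preserves submersive pullbacks, this reduces the claim to a single elementary sheaf $\uHom_B(U,F)$, for which it says that a morphism $U\times_B\mathrm{Tot}\,M^\bullet\to F$ of derived smooth manifolds always factors through $U\times_B N$ for some smooth $N$ receiving a map from $\mathrm{Tot}\,M^\bullet$, uniquely up to refinement. Surjectivity here is a Borel/jet-extension argument — the finite-order and derived infinitesimal data of the morphism can be realized by an honest smooth family — and uniqueness is a Whitney-extension argument. Combined with the target computation, both sides equal $\mathrm{Tot}_n\,\uSec_B(C,W)_\Sm(M^n)$ and the comparison map is the tautological identification.

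The step I expect to be the main obstacle is the commutation of $(\Sm\to\Der)_!$ with these totalizations: since $(\Sm\to\Der)_!$ is a left adjoint and \emph{not} left exact, this is not formal, but a genuine statement about the fine structure of $\Der$ — ultimately about the $C^\infty$-rings of finite limits of smooth manifolds. It is exactly here that properness of $C\to B$ is indispensable, both to make the descent diagram finite (so that the submersive-pullback preservation of Proposition \ref{rkanpullbacksheaf} applies) and to keep the elementary mapping sheaves under control.
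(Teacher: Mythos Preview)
Your target computation misreads the fourth axiom of Definition~\ref{derdef}. That axiom does \emph{not} assert that $\Hom_\Der(-,N)$ sends finite cosifted limits in $\Der$ to limits of spaces; the functor lands in $\Shv(-)^\op\rtimes\Top$, and a relative limit over $\Top$ there means that the underlying space is the topological limit while the sheaf component is a \emph{colimit} (in $\Shv$, because of the $\op$) of the pulled-back sheaves. Global sections do not take such colimits to limits, so the identification $\Hom_\Der(\mathrm{Tot}(C\times_BM^\bullet),W)=\mathrm{Tot}_n\Hom_\Sm(C\times_BM^n,W)$, and hence your formula $\uSec_B(C,W)_\Der(\mathrm{Tot}\,M^\bullet)=\mathrm{Tot}_n\,\uSec_B(C,W)_\Sm(M^n)$, does not follow. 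The passage from the sheaf-level colimit on $\abs{Q\times C}$ to any statement on $\abs Q$ is precisely where properness of $C$ and the $C^\infty$-specific input have to enter, and your computation bypasses it.

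The paper's argument is structured differently and isolates this input. It first characterizes the essential image of $(\Sm\to\Der)_!$ as those stacks whose associated functor $\Der\to\Shv(-)^\op\rtimes\Top$ sends finite cosifted limits to relative limits over $\Top$, and then verifies this criterion directly for $\uSec(C,W)_\Der$. Axiom four yields the required colimit diagram of sheaves on $\abs{Q\times C}$; the remaining step is that this diagram remains a colimit after pushforward along the proper map $\abs{Q\times C}\to\abs Q$. Proper base change handles the filtered part, but the pushouts go through only because the sheaves involved are \emph{soft} (partitions of unity) --- the paper explicitly notes the argument fails in the analytic setting. Your Borel/Whitney invocation is in the right family of $C^\infty$-specific facts but is not the form that does the work here, and your descent reduction to $\uHom_B(U,F)$ with $U$ non-compact discards properness of $C\to B$ at exactly the stage where it is essential.
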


\begin{proof}
We begin this sketch with the case $B=*$.

We claim that a derived smooth stack lies in the essential image of left Kan extension $(\Sm\to\Der)_!:\Shv(\Sm)\to\Shv(\Der)$ iff its associated functor $\Der\to\Shv(-)^\op\rtimes\Top$ sends finite cosifted limits (equivalently, totalizations of truncated cosimplicial objects) to relative limits over $\Top$ (which involves \emph{co}limits of sheaves).
The final axiom of the $\infty$-category of derived smooth manifolds (Definition \ref{derdef}) implies $\Sm\subseteq\Shv(\Der)$ satisfies this condition.
Satisfaction of the condition is evidently closed under taking colimits, so everything in $\Shv(\Sm)\subseteq\Shv(\Der)$ satisfies it as well.
To prove the converse, it is enough (by the adjunction of $(\Sm\to\Der)_!$ and $(\Sm\to\Der)^*$) to note that if $F,G\in\Shv(\Der)$ satisfy the condition and a map $F\to G$ is an isomorphism over $\Sm$, then it is an isomorphism (since every derived smooth manifold is locally a finite limit of smooth manifolds).

Now, let us check that $\uSec(C,W)$ satisfies this criterion, i.e.\ that it sends finite cosifted limits (equivalently, totalizations of truncated cosimplicial objects) to relative limits over $\Top$.
Let $Q=\lim_\alpha Q_\alpha$ be a finite cosifted limit in $\Der$.
The final axiom of Definition \ref{derdef} implies that the functor of sections of $W$ sends the finite cosifted limit $Q\times C=\lim_\alpha Q_\alpha\times C$ to a relative limit over $\Top$.
The desired result for $\uSec(C,W)$ and $Q=\lim_\alpha Q_\alpha$ is the assertion that this relative limit diagram remains a relative limit diagram after pushing forward to $Q=\lim_\alpha Q_\alpha$.
By proper base change \cite[7.3.1.18]{luriehtt} (which applies since $C$ is compact Hausdorff), we reduce to the assertion that pushforward $\Shv(Q\times C)\to\Shv(Q)$ preserves a certain colimit diagram.
Proper pushforward preserves \emph{filtered} colimits by proper base change.
Proper pushforward does not preserve all pushouts, but it does in this case since the sheaves in question are \emph{soft} (have partitions of unity---the desired result is local, so we may assume wlog that $W\to C$ is a vector bundle).
This usage of softness (i.e.\ the existence of partitions of unity, since we are in the smooth setting) is essential, as the result in question fails in the analytic setting (under the same hypothesis that $C$ is compact Hausdorff).

To treat the case of general smooth manifolds $B$, it is easier to prove a stronger result, namely that $\uSec_B(C,W)\in\Shv(\Der\downarrow B)$ is in the essential image of $\Shv(\Sm\downarrow^{\mathsf{subm}}B)$ (sheaves on smooth manifolds submersive over $B$).
We may then follow the same strategy.
\end{proof}

The following result is pure category theory, though it is not quite trivial.

\begin{lem}\label{rkanpullbacksheaf}
Let $f:\sfC\to\sfD$ be a functor.
Let $\cP$ and $\cQ$ be properties of morphisms in $\sfC$ and $\sfD$ (respectively) preserved under pullback.
If $f$ sends pullbacks of $\cP$-morphisms to pullbacks of $\cQ$-morphisms, then so does the left Kan extension functor $f_!:\sfP(\sfC)\to\sfP(\sfD)$.
When $\sfC$ and $\sfD$ are perfect topological $\infty$-sites and $f$ is topological, the same holds for the sheaf left Kan extension functor $f_!:\Shv(\sfC)\to\Shv(\sfD)$.
\end{lem}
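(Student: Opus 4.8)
The plan is to prove the presheaf statement by hand and then deduce the sheaf version from left-exactness of sheafification. Throughout I read ``$\phi$ is a pullback of a $\cP$-morphism'' to mean that $\phi$ is one leg of a cartesian square whose opposite leg is the representable $h_p$ on some $\cP$-morphism $p:c\to c'$. The one preliminary point is that, since $\cP$ is stable under pullback (so pullbacks of $p$ along arbitrary maps exist in $\sfC$), the Yoneda embedding identifies the base change of $h_p$ along any $h_b\to h_{c'}$ with the representable $h_{b\times_{c'}c}$.

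For the presheaf case, write such a $\phi$ as the projection $Y\times_{h_{c'}}h_c\to Y$ for some $Y\to h_{c'}$; then it suffices to show that the \emph{canonical} comparison map $f_!(Y\times_{h_{c'}}h_c)\to f_!Y\times_{h_{fc'}}h_{fc}$ (using $f_!h_c=h_{fc}$ and $f_!h_{c'}=h_{fc'}$) is an equivalence, naturally in $Y\in\sfP(\sfC)_{/h_{c'}}$. Both sides are colimit-preserving functors of $Y$ --- $f_!$ is a left adjoint, and base change along a fixed morphism preserves colimits in $\sfP(\sfC)$ and in $\sfP(\sfD)$ by universality of colimits --- so, every object over $h_{c'}$ being a colimit of representables $h_b\to h_{c'}$, it is enough to treat the case $Y=h_b$. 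There the preliminary point gives $Y\times_{h_{c'}}h_c=h_{b\times_{c'}c}$, and applying $f_!$ and then the hypothesis on $f$ turns this into $h_{f(b\times_{c'}c)}=h_{fb\times_{fc'}fc}=h_{fb}\times_{h_{fc'}}h_{fc}=f_!Y\times_{h_{fc'}}h_{fc}$, with the comparison map the identity. Tracing through, $f_!\phi$ is exhibited as a base change of $h_{fp}$, which is the representable on the $\cQ$-morphism $fp$.

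For the sheaf case, let $\sfC$ and $\sfD$ be perfect topological $\infty$-sites and $f$ topological. A topological functor preserves covers (it preserves open embeddings and pulls back the induced open covers of underlying spaces), so $f^*$ preserves sheaves and $f_!^{\Shv}$ is computed as $L_\sfD\circ f_!^{\sfP}\circ\iota_\sfC$, where $\iota_\sfC:\Shv(\sfC)\hookrightarrow\sfP(\sfC)$ is the inclusion and $L_\sfD:\sfP(\sfD)\to\Shv(\sfD)$ is sheafification. Given a cartesian square in $\Shv(\sfC)$ exhibiting $\phi$ as a pullback of a $\cP$-morphism $p$ (note $h_p$ is a sheaf, the site being subcanonical), the right adjoint $\iota_\sfC$ carries it to a cartesian square of the same form in $\sfP(\sfC)$; the presheaf case turns this, under $f_!^{\sfP}$, into a cartesian square in $\sfP(\sfD)$ exhibiting $f_!^{\sfP}\phi$ as a pullback of $h_{fp}$; and $L_\sfD$, being left exact, preserves this cartesian square, while $L_\sfD h_{fc}=h_{fc}$ and likewise for $h_{fc'}$ by subcanonicity, so $f_!^{\Shv}\phi$ is a pullback of the $\cQ$-morphism $h_{fp}$.

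The only step that is not entirely formal --- presumably why the statement is flagged as not quite trivial --- is the presheaf case: one needs pullback-stability of $\cP$ exactly to know that the base changes $h_b\times_{h_{c'}}h_c$ are representable, since it is only through \emph{representability} that the hypothesis on $f$ can be invoked, and this must be combined with universality of colimits and the density of representables. Given that, the passage to sheaves uses nothing more than left-exactness of sheafification and subcanonicity of the sites.
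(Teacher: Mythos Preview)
The paper does not actually supply a proof of this lemma; it is stated and flagged as ``pure category theory, though it is not quite trivial,'' but no argument is given. Your proof is correct and fills the gap in the expected way: the reduction of the presheaf case to representable $Y$ via universality of colimits and density of representables, followed by a direct appeal to the hypothesis on $f$, is the standard and essentially unique route, and the passage to sheaves via $f_!^{\Shv}=L_\sfD\circ f_!^{\sfP}\circ\iota_\sfC$ together with left-exactness of sheafification and subcanonicity is likewise the natural argument.

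One interpretive remark: you read ``$\cP$-morphism in $\sfP(\sfC)$'' as the Yoneda image $h_p$ of a $\cP$-morphism $p$ in $\sfC$, which is the literal reading of the lemma as stated. The paper's application in the proof of Theorem~\ref{representabilitytheorem}, however, is to the map $\uSec_B(C,W)^\reg\to\uSec(C_0,H_0)$ between non-representable stacks, so implicitly $\cP$ is being extended to the class of \emph{relatively representable} morphisms whose pullback to any representable base lies in $\cP$. This extension follows from what you have proven by one further colimit argument (write the target of the $\cP$-morphism itself as a colimit of representables and use universality of colimits once more on both sides); you may want to note this explicitly so that the lemma matches its use.
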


\section{Log smooth manifolds}

So far, we have only discussed pseudo-holomorphic maps from compact \emph{smooth} Riemann surfaces (and families thereof).
We now seek to generalize our discussion to Riemann surfaces with cylindrical ends and degenerating families thereof (as is necessary for most practical applications of the theory).
To do this, we need to fix a differential geometric context in which we can speak about such objects and define and prove representability of moduli functors of the same basic form considered in Section \ref{modulistackssection}.

Joyce \cite{joyceanalyticcorners} has proposed that the formalism of what we shall call \emph{log smooth manifolds} provides such a suitable differential geometric context.
The beginning of this theory is the `$b$-differential calculus' of Melrose \cite{melroseicm,melroseconormal,melroseindex} and its applications to linear elliptic equations.
The key notion of `log smoothness' seems to have been formalized first in work of Joyce \cite{joyceanalyticcorners} and, in a somewhat different form, Parker \cite{parkerexploded}, both of whom noted its applicability to pseudo-holomorphic curve problems.

Let us now define log smooth manifolds.
Given a \emph{real polyhedral cone} $P$ (an intersection of half-spaces in a finite-dimensional real vector space), there is a corresponding \emph{real affine toric variety} $X_P=\Hom((P,+),(\RR_{\geq 0},\cdot))$.
This $X_P$ is naturally stratified by the faces of $P$ (associate to a homomorphism $f:P\to\RR_{\geq 0}$ the face $f^{-1}(\RR_{>0})\subseteq P$), and it is known that $X_P$ and $P$ are homeomorphic as stratified topological spaces \cite[Theorem 1.4]{nakayamaogus}.
A \emph{log structure} on a topological space $X$ is a sheaf of monoids $\cO_X^{\geq 0}$ on $X$ equipped with a map to the sheaf of monoids $C_X^{\geq 0}$ of $\RR_{\geq 0}$-valued continuous functions under multiplication, with the property that this map is an isomorphism over the submonoid $C_X^{>0}\subseteq C_X^{\geq 0}$ of non-vanishing functions.
A map of log topological spaces $(f,f^\flat):(X,\cO_X^{\geq 0})\to(Y,\cO_Y^{\geq 0})$ is a continuous map $f$ along with a map of sheaves of monoids $f^\flat:\cO_Y^{\geq 0}\to f_*\cO_X^{\geq 0}$ compatible with pullback of $\RR_{\geq 0}$-valued functions.
We equip the space $X_P$ with the log structure associated to the tautological `pre-log structure' $P\to C_{X_P}^{\geq 0}$.
When $P=\RR_{\geq 0}$, we denote $X_P$ by $\pRR_{\geq 0}=X_{\RR_{\geq 0}}$.

\begin{defi}
To any element $p\in P$ there is an associated one-form on $X_P$ given by the pullback of $\frac{dx}x$ under the `evaluate at $p$' map $X_P\to\RR_{\geq 0}$ (dually, associated to an element of $(P^\gp)^*$ is a vector field on $X_P$).
This gives a notion of $C^1$-functions $X_P\to\RR$, and to each such function there is an associated derivative map $X_P\to P^\gp$.
We can inductively define $C^k$-functions $X_P\to\RR$ for all $k$, and so smooth functions as well.
A log map $X_P\to X_Q$ is called smooth when it is locally the product of a monomial map $X_P\to X_Q$ (i.e.\ induced by a map of polyhedral cones $Q\to P$) and a smooth map $X_P\to X_Q^\circ=(Q^\gp)^*$.
\end{defi}

Given this notion of differentiability, a log smooth manifold is a log topological space equipped with an atlas of charts from open subsets of various $X_P$, whose transition maps are log smooth.
A log smooth manifold locally modelled on $\pRR_{\geq 0}\times\RR^k$ may be reasonably called a ``manifold with asymptotically cylindrical ends'', as can be seen from the following example.

\begin{example}\label{logsmoothexample}
What are the log smooth maps $\RR^n\times\pRR_{\geq 0}\to\RR$ and $\RR^n\times\pRR_{\geq 0}\to\pRR_{\geq 0}$?
The answer to this question is easiest to grasp if we use \emph{log coordinates} $\pRR_{\geq 0}=(-\infty,\infty]$ identifying $s\in(-\infty,\infty]$ with $e^{-s}\in\pRR_{\geq 0}$.
In these coordinates, the log smooth maps $f:\RR^n\times(-\infty,\infty]=\RR^n\times\pRR_{\geq 0}\to\RR$ and $g:\RR^n\times(-\infty,\infty]=\RR^n\times\pRR_{\geq 0}\to\pRR_{\geq 0}=(-\infty,\infty]$ are precisely those maps with the following behavior as $s\to\infty$:
\begin{align*}
f(x_1,\ldots,x_n,s)&=f_\infty(x_1,\ldots,x_n)+o(1)_{C^\infty}\\
g(x_1,\ldots,x_n,s)&=g_\infty(x_1,\ldots,x_n)+a\cdot s+o(1)_{C^\infty}
\end{align*}
for smooth $f_\infty,g_\infty:\RR^n\to\RR$ and constant $a\geq 0$, where $o(1)_{C^\infty}$ denotes a function all of whose derivatives $\partial_s^\ell\partial_{x_1}^{m_1}\cdots\partial_{x_n}^{m_n}$ are $o(1)$ (decay to zero) as $s\to\infty$ (uniformly over compact subsets of $\RR^n$).

For applications to elliptic problems, one can impose a stronger \emph{exponential decay} condition, namely replace $o(1)_{C^\infty}$ with `$O(e^{-\delta s})_{C^\infty}$ for some (unspecified) $\delta>0$' (this is the distinction between `roughly smooth' and `analytically smooth' in Joyce's terminology).
\end{example}

\begin{rem}
Suppose $\varphi:\pRR_{\geq 0}\to\RR_{\geq 0}$ (often called a `gluing profile') is a homeomorphism with the property that for all log smooth $F:\pRR_{\geq 0}^k\to\pRR_{\geq 0}$ with exponential decay in the sense of Example \ref{logsmoothexample}, the conjugation $\varphi\circ F\circ(\varphi^k)^{-1}:\RR_{\geq 0}^k\to\RR_{\geq 0}$ is smooth (in the usual sense).
For example, $\varphi(t)=(-\log t)^{-1}=s^{-1}$ has this property (for $k=1$, this amounts to showing that $(r^{-1}+f(r^{-1}))^{-1}=r/(1+rf(r^{-1}))$ is smooth at $r=0$ whenever $f(s)=O(e^{-\varepsilon s})_{C^\infty}$ as $s\to\infty$, which follows from explicit differentiation).
Such a function $\varphi$ determines a functor from the category of log smooth manifolds `with exponential decay' locally isomorphic to $\pRR_{\geq 0}^k$ and log smooth maps with exponential decay, to the category of smooth manifolds with corners (by definition locally isomorphic to $\RR_{\geq 0}^k$) and smooth maps.
This functor does not respect tangent bundles on the nose, rather only `up to homotopy'.
\end{rem}

\begin{example}\label{puncturelog}
A punctured Riemann surface has a unique bordification to a log smooth manifold (since holomorphic maps $D^2\setminus 0\to D^2\setminus 0$, written in cylindrical coordinates $z=e^{s+it}$ for $(s,t)\in(0,\infty)\times S^1$, have the required form from Example \ref{logsmoothexample}).
\end{example}

\begin{example}\label{contactlog}
Given a co-oriented contact manifold $(Y,\xi)$, its symplectization $SY$ is the subspace of positive contact forms inside $T^*Y$, and it is equipped with the restriction of the tautological 1-form on $T^*Y$.
Given a positive contact form $\alpha$ on $Y$, the symplectization $SY$ has coordiates $\RR\times Y$ with the 1-form $e^s\alpha$.
The bordification $\overline SY=[-\infty,\infty]\times Y$ is naturally a log smooth manifold, obtained by gluing $s\in\RR$ to $e^s\in\pRR_{\geq 0}$ (near $s=-\infty$) and $e^{-s}\in\pRR_{\geq 0}$ (near $s=\infty$).
This bordification $\overline SY$ is independent of the choice of contact form, since the relevant coordinate change $(s,p)\mapsto(s+\frac{\alpha'}\alpha(p),p):\RR\times Y\to\RR\times Y$ is log smooth (compare Example \ref{logsmoothexample}).
\end{example}

\begin{example}
Hofer \cite{hoferweinstein} and Hofer--Wysocki--Zehnder \cite{hwzasymptotics} show that for any cylindrical almost complex structure on $SY$, every pseudo-holomorphic map $u:D^2\setminus 0\to SY$ with finite Hofer energy satisfies exponential decay in cylindrical coordinates in the sense of sense of Example \ref{logsmoothexample}, hence extends to a map of log smooth bordifications from Examples \ref{puncturelog} and \ref{contactlog}.
\end{example}

At a point $x$ of a log smooth manifold $M$, there is a natural short exact sequence
\begin{equation*}
0\to T^*_xM_x\to T^*_xM\to\cZ_{M,x}^\gp\to 0
\end{equation*}
where $M_x\subseteq M$ is the local stratum of $M$ containing $x$, and $\cZ_{M,x}$ is the sharp (meaning it contains no non-zero invertible elements) polyhedral cone govering the local structure of $M$ at $x$.
This short exact sequence is functorial in log smooth maps $f:M\to N$.

We call a map $f:M\to N$ a \emph{broken submersion} at $p\in M$ when $T^*_pf:T^*_{f(p)}N\to T^*_pM$ is injective and $f^{\flat\flat}_p:\cZ_{N,f(p)}\to\cZ_{M,p}$ is \emph{locally exact} (a map of polyhedral cones $f:Q\to P$ is called \emph{exact} when $(f^\gp)^{-1}(P)=Q$ \cite[Definition (4.6)\settowidth\dummy{$]$}]{katolog}, and it is called \emph{locally exact} when for every face $F\subseteq P$, the localized map $Q+f^{-1}(F)^\gp\to P+F^\gp$ is exact \cite[(A.3.2)(iii)\settowidth\dummy{$]$}]{illusiekatonakayama}\cite[Definition 2.1(3)\settowidth\dummy{$]$}]{nakayamaogus}).
Broken submersions model degenerating families of log smooth manifolds.

\begin{example}
The multiplication map $(x,y)\mapsto xy$ is a broken submersion $\pRR_{\geq 0}^2\to\pRR_{\geq 0}$.
We may add circle factors and consider the map $(S^1\times\pRR_{\geq 0})^2\to S^1\times\pRR_{\geq 0}$ given by $(\theta,\phi,x,y)\mapsto(\theta+\phi,xy)$.
This is an `oriented real blow-up' of the standard complex analytic nodal degeneration $\CC^2\to\CC$ given by $(z,w)\mapsto zw$ (take $z=xe^{i\theta}$ and $w=ye^{i\phi}$).
The present context of log smooth manifolds and broken submersions thereof keeps track of a `matching' of `circles at infinity' of the two sides of a node (in the present example, the circle $S^1\times 0$ in the base parameterizes all possible matchings, but this need not be the case in an arbitrary broken submersion).
\end{example}

It is a nontrivial result that broken submersions are preserved under pullback.
A \emph{simply-broken submersion} is a broken submersion which is locally (on the source) a pullback of the broken submersion $\pRR_{\geq 0}^2\times\RR^k\to\pRR_{\geq 0}$ given by $(x,y,p)\mapsto xy$.
A \emph{strict submersion} is locally modelled on a pullback of $\RR^k\to *$.

We may now formulate a precise expectation.
We consider a simply-broken submersion $C\to B$ of relative dimension two, with fiberwise complex structure, and we consider a strict submersion $W\to C$.
There is then a moduli stack $\uHol_B(C,W)_{\Log\Sm}$ on log smooth manifolds defined as in Section \ref{modulistackssection} (the definition there is purely `diagrammatic', hence does not care what category we work with, provided it has the relevant pullbacks and a notion of `vertically pseudo-holomorphic').
Following Joyce, we expect the regular locus $\uHol_B(C,W)_{\Log\Sm}^\reg$ inside such moduli stacks to be representable (Parker proves a closely related result \cite[Theorem 6.8]{parkerkuranishi}).
The essential analytic content in this result (beyond that already contained in the case of families of compact Riemann surfaces) is the exponential decay of pseudo-holomorphic maps in cylindrical coordinates (in all derivatives, including derivatives in the base direction, taken in log coordinates) as in Fukaya--Oh--Ohta--Ono \cite[A1.58]{foooII}.
While broken submersions are not locally trivial, they do always have connections (e.g.\ the everywhere non-vanishing vector field $\frac 12(x\partial_x+y\partial_y)$ on $\pRR_{\geq 0}^2$ can be taken as the horizontal distribution of a connection on the multiplication map $\pRR_{\geq 0}^2\to\pRR_{\geq 0}$), which allows for an implementation of the inductive strategy for proving smoothness of moduli spaces given that they are $C^1$, as discussed in the proof of Theorem \ref{representabilitytheorem}.

The language of log smooth manifolds can also be used to model degenerations of the target as considered in symplectic field theory (using the compactifications from either \cite{behwz} or \cite{pardoncontact}) and to describe the moduli spaces of `witch curves' used to construct $A_\infty$-functors associated to Lagrangian correspondences and their compositions \cite{bottmanassociahedra,bottmanwitch,abouzaidbottman}.

Finally, one could hope to show the entire moduli stack $\uHol_B(C,W)$ to be representable on a suitable $\infty$-category of `derived log smooth manifolds'.

\bibliographystyle{amsplain}
\bibliography{nonlinear}
\addcontentsline{toc}{section}{References}

\address{Simons Center for Geometry and Physics\\
State University of New York\\
Stony Brook, NY 11794-3636
U.S.A.}

\end{document}